\documentclass[a4paper,12pt]{article}
\usepackage[utf8x]{inputenc}
\usepackage{mathtools}
\usepackage{amsmath,amssymb,amsthm}
\usepackage{color}
\usepackage
	[
	colorlinks,
	backref
	]
{hyperref}
\usepackage[all]{xy}
\usepackage{float}

\title{
On Bias and Rank}
\author{ David Kazhdan \and 
Tomer M. Schlank}

\newtheorem{thmvoid}{}[section]

\newtheorem{lemma}[thmvoid]{Lemma}
\newtheorem{theorem}[thmvoid]{Theorem}

\newtheorem{corollary}[thmvoid]{Corollary}
\theoremstyle{definition}
\newtheorem{definition}[thmvoid]{Definition}
\newtheorem{remark}[thmvoid]{Remark}
\newcommand{\comm}[1]{}

\newcommand{\QQ}{\mathbb{Q}}
\newcommand{\CC}{\mathbb{C}}

\newcommand{\mcF}{\mathcal F}
\newcommand{\mF}{\mathbb F}
\begin{document}

\maketitle
\abstract{

Given a hypersurface $X\subset \mathbb{P}^{N+1}_{\mathbb{C}}$ Dimca gave a proof showing that the cohomologies of X are the same as the projective space in a range determined by the dimension of the singular locus of X. We prove the analog of Dimca's result case when $\mathbb{C}$ is replaced with an algebraically closed field of finite characteristic and singular cohomology is replaced with $\ell$-adic \'etale cohomology. The Weil conjectures allow relating results about \'eatle cohomology to counting problems over a finite field. Thus by applying this result, we are able to get a relationship between the algebraic properties of certain polynomials and the size of their zero set.
}
\comm{\color{red}I suggest to present Lemma 3.1 as the main result and either to prove it immediately or to formulate and derived the corollaries we are interested in (this requires less restructuring). 

\bigskip

I can prove the  follwing result. Given a finite family $\mcF$ of say homogeneous 
polynomials $P_i$ we define the rank of $\mcF$ as the minimal rank of a non-zero linear combination of $P_i$

\bigskip

Claim. For any $\bar d=(d_1,\dots ,d_c)$ there exists an explicit $r(\bar d)$
such that for any system $P_i[x_1, \dots ,x_n], deg(P_i)\leq d_i, P_i\in k[x_1, \dots ,x_n]$ the variety $X_{\mcF}$ is a complete intersection

\bigskip

Could this result replace $AH$?

If we really need $AH$ please indicate the place we use it.

\bigskip

Please insert the reference to  add 
Bias vs structure of polynomials in large fields, and applications in effective algebraic geometry and coding theory
Authors: Abhishek Bhowmick, Shachar Lovett

which essensially containes our Theorem 1.

\bigskip

Write $k=\mF _q, k_n= \mF _{q^n} $
replace $\mF _{q^n}\otimes _{\mF _q}V$ by $V(k_n)$.

\bigskip

The diagram on the top of p.8 gous out of the page.

\bigskip Add a couple of words to the (non existing) proof of Lemma 2.3

Let $p$ be a prime and $q = p^m, k= \mathbb{F}_q $ We denote by $k_n$ the extension of $k$ of degree $n$.

Let $V$ be a vector space over $k$ of dimension $N$, and let $F \in k[V]$ be a polynomial.

For every  $n \in \mathbb{N}$,  $F$ defines a map
$$F|_{n} \colon V(k_n) \to  k_n$$ }
%For the reminder of this note we fix:
%\begin{enumerate}
%\item A prime  $p$ .
%\item An integer $1<d<p$.
%\item A prime power $q \coloneqq p^m$ 
%\end{enumerate}
\section{introduction}

Let $X\subset \mathbb{P}^{N+1}_{\CC}$ be a hypersurface. If $X$ is smooth, Lefschetz  hyperplane theorem  implies  that all the cohomologies of $X$ but the one in  dimension $N$ are the same as for the projective  spaces. 
This result has a qualitative generalisation due to Dimca \cite{Dimca1}:

\begin{lemma}
Let $X \subset \mathbb{P}^{N+1}_{\CC}$  be a projective hyper-surface of dimension $n$ such that  the singular locus of $X$ is in at least  in codimention $c$ then:
$$H^m(X(\CC)) = H^m(\mathbb{P}^{N}(\CC))$$ for $2N-(c+2) \leq m \leq 2N$

\end{lemma}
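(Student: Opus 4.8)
\noindent\textit{Sketch of a proof.}\quad Throughout, $H^{\bullet}$ and $H^{\bullet}_{c}$ denote singular cohomology, respectively with compact supports, of the complex points, with $\QQ$-coefficients. Write $X=V(f)$ for a homogeneous $f\in\CC[x_{0},\dots ,x_{N+1}]$ of degree $d$, put $\Sigma=\mathrm{Sing}(X)$, and set $U=\PP^{N+1}_{\CC}\setminus X$. The plan is to translate the statement about the \emph{top} cohomology of the singular variety $X$ into a \emph{connectivity} statement for a smooth affine variety, where it becomes tractable. First I would note that $U$ is smooth --- it is open in $\PP^{N+1}$ --- and affine --- it is the complement of an effective ample divisor --- of dimension $N+1$. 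Applying $R\Gamma(\PP^{N+1},-)$ to the excision triangle $j_{!}\QQ_{U}\to\QQ_{\PP^{N+1}}\to i_{*}\QQ_{X}\xrightarrow{+1}$ (with $j\colon U\hookrightarrow\PP^{N+1}$ open, $i\colon X\hookrightarrow\PP^{N+1}$ closed) gives rise to the long exact sequence
\[
\cdots\to H^{m}_{c}(U)\xrightarrow{\,\alpha_{m}\,}H^{m}(\PP^{N+1})\xrightarrow{\,r_{m}\,}H^{m}(X)\to H^{m+1}_{c}(U)\to\cdots .
\]
Since $H^{m}(\PP^{N+1})=H^{m}(\PP^{N})$ for every $m\le 2N$, the lemma follows once $r_{m}$ is an isomorphism for $2N-(c+2)\le m\le 2N$.

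The map $r_{m}$ is an isomorphism as soon as $\alpha_{m}=0$ and $H^{m+1}_{c}(U)=0$. The vanishing of $\alpha_{m}$ is automatic for $m\le 2N+1$: the restriction map $H^{j}(\PP^{N+1})\to H^{j}(U)$ is zero for every $j\ge 1$ --- it annihilates all positive powers of the hyperplane class $h$ (because $d\,h|_{U}=[X]|_{U}=0$, $X$ and $U$ being disjoint) and the odd cohomology of $\PP^{N+1}$ vanishes --- and by Poincar\'e duality on the smooth $U$ the map $\alpha_{m}$ is dual to this restriction in degree $2N+2-m$. So it remains to show $H^{m+1}_{c}(U)=0$ for $2N-(c+2)\le m\le 2N$; once more by Poincar\'e duality on $U$ (real dimension $2N+2$) this is equivalent to $\widetilde H^{i}(U)=0$ for $1\le i\le c+3$, a high-acyclicity statement for the hypersurface complement.

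To get hold of $\widetilde H^{i}(U)$ in low degrees I would pass to the global Milnor fibre $F=\{f=1\}\subset\AA^{N+2}_{\CC}$. It is smooth: a singular point of $F$ would be a common zero of all $\partial f/\partial x_{j}$, hence by Euler's identity a zero of $f$, which cannot lie on $\{f=1\}$; and it is affine of dimension $N+1$. The group $\mu_{d}$ of $d$-th roots of unity acts freely on $F$ by scalar multiplication, with quotient $F/\mu_{d}\cong U$, whence $H^{i}(U)=H^{i}(F)^{\mu_{d}}$ and it suffices to prove $\widetilde H^{i}(F)=0$ in the same range, i.e.\ that $F$ is highly connected.

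This last point is the crux, and it is where the hypothesis on $\Sigma$ does genuine, non-formal work. The critical locus of $f\colon\AA^{N+2}\to\AA^{1}$ at the origin is exactly the affine cone over $\Sigma$, of dimension $\dim\Sigma +1$, which is small precisely because $\Sigma$ has codimension $c$. The hard part will be a Kato--Matsumoto type connectivity estimate for $F$ in terms of this dimension --- the natural extension of Milnor's theorem that the Milnor fibre of an isolated hypersurface singularity is a bouquet of middle-dimensional spheres --- bookkept against $c$ so as to yield exactly the vanishing needed above. A concrete route to it: a generic linear hyperplane section of $F$ is again the global Milnor fibre of a hypersurface whose singular locus has dropped by one in dimension, while by the Lefschetz theorem for affine varieties (Andreotti--Frankel, Hamm--L\^{e}) passing to such a section leaves the cohomology unchanged below the top degree; iterating until the singular locus is emptied lands one in Milnor's bouquet theorem, and propagating the connectivity back up the chain of sections gives the bound. (In the $\ell$-adic incarnation carried out later in the paper one would instead argue through the perverse $t$-structure --- $\QQ_{\ell}[N+1]$ on the smooth $F$ is perverse --- together with Artin's affine vanishing theorem; the substantive input, deducing the connectivity of $F$ from the size of $\Sigma$, is the same.) Granting the estimate, the three earlier steps assemble into the lemma. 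The only ingredient that is not formal long-exact-sequence-and-duality bookkeeping is the connectivity bound for $F$, which is where all the geometry resides.
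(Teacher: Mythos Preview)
Your overall strategy is correct and is essentially Dimca's original argument, but the numbers do not close up, and the reason is a sign slip in the paper's stated range rather than in your reduction. The bound should read $2N-(c-2)\le m\le 2N$, not $2N-(c+2)\le m\le 2N$; check this against a smooth hypersurface, where the affine singular locus of the cone is just the origin, so $c=N+1$, and the conclusion must avoid the middle degree $m=N$. With the typo you correctly reduce to $\widetilde H^{i}(U)=0$ for $1\le i\le c+3$, but Kato--Matsumoto only gives that the global Milnor fibre $F$ is $(c-1)$-connected (with $f\colon\CC^{N+2}\to\CC$ and critical locus of dimension $s=N+1-c$, the fibre is $(N+2-s-2)$-connected). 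That yields $\widetilde H^{i}(U)=\widetilde H^{i}(F)^{\mu_d}=0$ for $1\le i\le c-1$, which is exactly what the corrected range needs. So once you fix the range, your argument goes through as written; the ``hard part'' you isolate is precisely the right one, and the bound it produces is sharp for the statement.

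As for the comparison: the paper does not prove the complex statement but proves its $\ell$-adic analogue (Lemma~\ref{lem:A}), and the route there is organised differently from yours. Instead of the pair $(\PP^{N+1},X)$ and Poincar\'e duality on $U$, the paper works with the punctured affine cone $K=F^{-1}(0)\smallsetminus\{0\}$: it uses vanishing/nearby cycles and perversity to bound the stalks of $\psi_F\QQ_\ell$, a $\mathbb G_m$-contraction lemma (after Verdier) to pass from the stalk at the origin to the cohomology of $F^{-1}(\AA^1\smallsetminus\{0\})$, then Alexander duality inside $\AA^{N+2}\smallsetminus\{0\}$ to transfer the vanishing to $K$, and finally the Gysin sequence for the $\mathbb G_m$-bundle $K\to X$. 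Both arguments ultimately rest on the same geometric input --- the connectivity of the Milnor fibre governed by $\dim\Sigma$ --- but you invoke it as Kato--Matsumoto while the paper's perverse-sheaf computation \emph{is} a proof of that input in \'etale cohomology, which is the actual new content there. Your packaging through $U$ and the excision sequence is a bit more direct over $\CC$; the paper's detour through $K$ is what makes the argument portable to positive characteristic.
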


The first main  result of this paper is Lemma ~\ref{lem:A} which is the analog of Dimca's result case when $\CC$ is replaced with an algebraic closed field of finite  characteristic and singular cohomology is replaces with $\ell$-adic \'eatle cohomology. 
The Weil conjectures  allows to relate results about \'eatle cohomology to counting problems over finite field. Thus by applying Lemma \ref{lem:A}  we able to get a relationship between the algebraic properties of certain polynomials and the size of their zero set.

We thank Dennis Gaitsgory and  Michael Finkelberg for useful discussions .
\section{The results}
More specifically  
Let $p$ be a prime and $q = p^m, k= \mathbb{F}_q $. We denote by $k_n$ the extension of $k$ of degree $n$ and by $\bar{k}$ the algebraic closure of  $k$.

Let $V$ be a vector space over $k$ of dimension $N$, and let $F \in k[V]$ be a polynomial.

For every  $n \in \mathbb{N}$,  $F$ defines a map
$$F|_{n} \colon V(k_n) \to  k_n$$

As a map of finite sets $F|_{n}$ induces a distribution on $ k_n$ according to size of the fibres.
In this paper we ask the following:

Assume that for $n$ large,  this  distribution is very far from uniform. Can we deduce a structural result on $F$?
Can one say that it is ``degenerate'' in some sense?
In \cite{Lov}, Bhowmick and Lovett prove  a  beautiful result  along this lines using combinatorial  methods . We give a similar result based on Lemma \ref{lem:A}.   To state our result in a precise way we need first to define a qualitative measurements of both the non-uniformity of the aforementioned distribution and the ``degeneracy'' of $F$. In next two subsections we deal with these two tasks one by one.

\subsubsection{The bias of a polynomial}
In the notation as above let $\nu_n$ be the uniform probability measure on the elements of $ V(K_n) $.
We denote $$
\mu_n^F \coloneqq (F|_n)_*(\nu_n)$$
As the push-forward probability measure.  
Simply put we have for  every $t \in k_n$

$$\mu_n^F(\{t\}) = \frac{\#(F|_{n}^{-1}(t)) }{\# V(k_n) } = \frac{\#(F|_{n}^{-1}(t))}{q^{nN}}$$

We wish to measure how far is $\mu^F_n$ from the uniform distribution on ${k_n}$.
As a measure we shall take  $$b_n=-\log_{q^n}(\max _{t,s\in k_n}(|\mu^F _n(t)-\mu^F _n(s)|)).$$
To get the asymptotic behaviour we put $$\mathcal{B}(F)= \limsup (b_n^{-1}).$$
We call $\mathcal{B}(F)$ the \emph{bias} of $F$.
\comm{\color{red}\begin{remark} If $P$ is homogeneous and the hypetsurface $X_P :=\{ v\in V-0|P(v)=0\}$ is smooth then it follows from Deligne that 
$\mathcal{B}(F)=1/2$.
\end{remark}
I expect that we need a different definition of $B(F)$. Since I am not used to the notion of $KL$-divergence I say in in my terms.

Let $$a_n=-log_{q^n}(max _{u,v\in k_n}(||\mu _n(u)|-|\mu _n(v)|)|)$$
Then $B(F)= limsup (a_n^{-1})$. So $B(F)\sim s$ iff $ |\mu _n(u)|-|\mu _n(v)|\sim q^{-s}$.}

\subsubsection{The rank of a polynomial}
\comm{\color{red}Do you define rank of $F$ over $\bar k$? If so say this explicitly.

We want to say that large rank implies small bias while Th.1 says the opposite}

Now let $G \in  \bar{k}[V]$ be homogeneous of degree $d>1$. 
We say that $G$ admits an $r$-factorization if 

$$G = \sum_{i=1}^{r} Q_i P_i$$
for $\deg Q_i, \deg P_i <d$.
The minimal $r$ such that $G$ admits an $r$-factorization is called \emph{The rank of $G$} and will be denoted by $\mathcal{R}(G)$.

More generally for $F \in k [V]$ of degree $d>1$ (not necessarily  homogeneous)  we denote 
$\mathcal{R}(F) \coloneqq \mathcal{R}(\tilde{F})$
Where $\tilde{F}$ the homogeneous part of $F$ of degree $d$ considered as polynomial over $\bar{k}$.
\section{The relationship of bias and rank} \comm{\color{red} This result is essentially in the Lovett's piece, but it make sense to give another proof. The Lovett's result does not give any explicit bounds-could our lead to such bounds?

}

We are now ready to state the main result
\begin{theorem}
Let $b\in \mathbb{R}_{>0}$, $d\in \mathbb{N}_{\geq 2}$ then there exist a constant $c(b,d)$ such that
for every  prime $p>d$, $q = p^m$ a prime power, $V$  a vector space over $\mathbb{F}_q$ and  $F \in \mathbb{F}_q[V]$  a polynomial of degree $d$ such that $\mathcal{R}(F)>c(b,d)$ we have 
$$ \mathcal{B}(F) >b$$
\end{theorem}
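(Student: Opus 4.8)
The strategy is to contrapose: assume $\mathcal{B}(F) \le b$, i.e. the fibre distribution of $F$ is very far from uniform for infinitely many $n$, and deduce that $\mathcal{R}(F)$ is bounded by a constant depending only on $b$ and $d$. The link between the two sides is provided by the Weil conjectures together with Lemma~\ref{lem:A}, the finite-characteristic analog of Dimca's theorem. Concretely, fix $n$ and $t \in k_n$; the fibre count $\#F|_n^{-1}(t)$ is, up to elementary manipulation, an exponential sum, and by Grothendieck–Lefschetz it is controlled by the $\ell$-adic étale cohomology (with compact support) of the affine hypersurface $\{F = t\} \subset V_{\bar k}$, or better of its projective closure. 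The point of Lemma~\ref{lem:A} is that if the degree-$d$ part $\tilde F$ has large rank $\mathcal{R}(\tilde F)$, then the projective hypersurface $\{\tilde F = 0\} \subset \mathbb{P}(V)$ has singular locus of large codimension $c$, since a low-codimension singular locus would force many linear relations among the partials and hence a low-rank factorization of $\tilde F$. So large rank $\Rightarrow$ large $c$ $\Rightarrow$ (by Lemma~\ref{lem:A}) the étale cohomology of $\{F=t\}$ agrees with that of projective space in a large range $\Rightarrow$ the fibre counts $\#F|_n^{-1}(t)$ all equal $q^{n(N-1)}$ up to an error term of size $O(q^{n(N-1 - c')})$ for some $c'$ growing with $c$, uniformly in $t$. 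That uniform error bound says exactly that $|\mu_n^F(t) - \mu_n^F(s)| = O(q^{-n(1+c')})$, hence $b_n^{-1} \le (1+c')^{-1}$ asymptotically, giving $\mathcal{B}(F) \le (1+c')^{-1}$. Choosing $\mathcal{R}(F)$ large enough forces $c'$ large, hence $\mathcal{B}(F)$ small, contradicting $\mathcal{B}(F) > b$ once $c(b,d)$ is chosen so that the resulting $(1+c')^{-1} < b$.

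The steps, in order: (1) Translate fibre counts into $\ell$-adic cohomology: for each $t$, express $\#F|_n^{-1}(t)$ via the trace of Frobenius on $H^\bullet_c$ of the affine fibre, and pass to the projective hypersurface $X_t = \overline{\{F = t\}} \subset \mathbb{P}^N$ to be able to apply Lemma~\ref{lem:A}. (2) Relate the rank of $\tilde F$ to the codimension of $\mathrm{Sing}(X_t)$: show that $\mathrm{Sing}(X_t)$ having codimension $< c$ implies $\tilde F$ has a factorization of rank bounded in terms of $c$ — this is essentially a statement that the ideal of partials of $\tilde F$ cutting out a large-dimensional locus forces $\tilde F$ to lie in a small ideal, using the Euler relation $d\cdot \tilde F = \sum x_i \partial_i \tilde F$ valid since $p > d$. (3) Invoke Lemma~\ref{lem:A} to conclude $H^m(X_t) = H^m(\mathbb{P}^{N-1})$ for $m$ in a range of length $\sim c$, so only the cohomology in middle degrees is "extra", and bound the dimensions of those middle-degree groups (e.g. via the degree $d$ and standard bounds on Betti numbers of degree-$d$ hypersurfaces, which are uniform in $q$). (4) Apply Deligne's purity: each extra Frobenius eigenvalue has absolute value $\le q^{n\cdot w/2}$ with $w$ strictly below $2(N-1)$, forcing $|\#F|_n^{-1}(t) - q^{n(N-1)}| \le C(d) \cdot q^{n(N-1 - c'/2)}$ uniformly in $t$ and $n$, where $c' \to \infty$ as $\mathcal{R}(F) \to \infty$. (5) Unwind the definition of $b_n$ and $\mathcal{B}(F)$ to get the quantitative contradiction, and read off the explicit $c(b,d)$.

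I expect the main obstacle to be Step (2): cleanly extracting a \emph{bounded-rank} factorization of $\tilde F$ from the mere fact that its singular locus has small codimension, with the bound depending only on $d$ (and the codimension), not on $N$ or $q$. The crude statement — that if $\partial_1 \tilde F, \dots, \partial_N \tilde F$ have a common zero locus of dimension $\ge N - c$ then $\tilde F$ is expressible using $\le r(c,d)$ products of lower-degree forms — is the algebraic heart of the matter and is where one must be careful about uniformity; this is closely related to the "rank vs. dimension of the singular locus" dichotomy that also underlies the Bhowmick–Lovett argument. A secondary but more routine difficulty is making sure the passage from the affine fibre $\{F=t\}$ to a projective hypersurface to which Lemma~\ref{lem:A} literally applies does not lose control: one must handle the hyperplane at infinity (whose contribution is governed by $\tilde F$, hence again by its large rank / small singular locus), and track that the comparison range in Lemma~\ref{lem:A} indeed grows linearly with the codimension $c$ so that the resulting $c'$ is an explicit increasing function of $\mathcal{R}(F)$.
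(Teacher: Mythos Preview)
Your overall architecture matches the paper's almost exactly: pass from affine fibres to projective hypersurfaces, invoke Lemma~\ref{lem:A} to match cohomology with $\mathbb{P}^N$ in a range growing with the codimension $c$ of the singular locus, feed this into Weil/Deligne to get a uniform point-count estimate $|\#F|_n^{-1}(t) - q^{n(N-1)}| \le M_F q^{n(N-c/2)}$, and unwind to $\mathcal{B}(F)\le 2/(c-2)$. The paper does precisely this (Lemmas~\ref{lem:2}--\ref{lem:3}), so Steps (1), (3), (4), (5) of your plan are on target.

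The one substantive divergence is your Step~(2). You propose to extract a bounded-rank factorization of $\tilde F$ from small codimension of the singular locus via the Euler relation $d\tilde F=\sum x_i\partial_i\tilde F$ and an analysis of the partials. The Euler relation by itself only places $\tilde F$ in the ideal of its $N$ partials, which gives no bound independent of $N$; getting a bound $r(c,d)$ depending only on $c$ and $d$ is genuinely deep. The paper does \emph{not} prove this step: it invokes Theorem~A of \cite{Hof} (where rank is called ``strength''), i.e.\ an Ananyan--Hochster--type result, as a black box (Lemma~\ref{lem:good} together with the easy Lemma~\ref{lem:5} handling the passage to the homogenization $\hat F_t$). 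You correctly flagged this as the main obstacle, but you should not expect to carry it out with the Euler relation alone.

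A minor point: your logical framing is tangled. You open by contraposing (``assume $\mathcal{B}(F)\le b$'') but then run the direct argument large-rank $\Rightarrow$ small $\mathcal{B}(F)$, and close by ``contradicting $\mathcal{B}(F)>b$'', which is the thing to be \emph{proved}. The paper's own derivation in fact yields $\mathcal{B}(F)\le 2/(c-2)$, i.e.\ large rank forces \emph{small} bias; the inequality in the theorem statement appears to be misoriented in the paper itself, so your confusion here is understandable, but the argument you wrote is the direct one, not a contrapositive.
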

The passage from the rank to the bias will be achieved by analysing the codimention of the singular locus of the fibers (or their respective projectivaztions ).
We shall set up some notation.

First we denote the by $X^F$ the hypersurface in $\mathbb{P}^{N-1}$  defined by $\tilde{F}$.  For any $t \in \bar{\mathbb{F}_q}$ we denote by $Y^F_{t}$ the projective variety in  $\mathbb{P}^{N}$ 
which is the closure  of the affine hypersurface $F = t$. When $F$ is clear from the context we shall omit it from the notation.

We now have 
$$F|_n^{-1}(t) = Y^F_t({k_n}) \smallsetminus X^F({k_n}).$$

\begin{definition}
We say that a projective $X$  is $c$-regular for an integer $c$  if the singular locus of $X$ is of codimention $c$ or more.
We shall say that a polynomial $F$ is $c$-good for an integer $c$  if $X^F$ and $Y_{t}^F$ are $c$-regular for all $t \in \bar{k}$.
\end{definition}
To estimate $\#F|_n^{-1}(t)$ we now use the Weil Conjectures  proved by Deligne.

Now let  $Z$ be a projective variety of dimension $e$ defined over a field $\mathbb{F}_q$ with $\bar{Z}$  the base change to the algebraic closure and $\phi_q$ the Frobenius  map. 
By the Weil conjectures we have 
$$\# Z({k_n}) =  \sum_{i =0}^{2e}(-1)^{i}\mathrm{Tr}\left(\phi_q^n | H^i_{\acute{e}t}(\bar{Z}) \right) $$
By Weil II  the eigenvalues of $\phi_q$ on $H^i_{\acute{e}t}(\bar{Z})$ have at most absolute values $q^{i/2}$
We thus get that if $$M_Z :=  \sum_{i =0}^{2e}\dim\left(H^i_{\acute{e}t}(\bar{Z}) \right)$$

Then for every integer $m$ we have

$$\left |\# Z({k_n}) -  \sum_{i =2e-m+1}^{2e}(-1)^{i}\mathrm{Tr}\left(\phi_q^n | H^i_{\acute{e}t}(\bar{Z}) \right)\right | \leq M_Zq^{n(2e-m)} $$

Now we use the following lemma which is due to Dimca in \cite{dimca1}  in the case of singular cohomology of complex varieties . We give in the last section a proof for $\ell$-adic cohomology over a algebraic  closed field of any characteristic (Which is required for the use of the Weil conjectures ) 
\begin{lemma}\label{lem:1}
Let $Z \subset \mathbb{P}^{n+1}$  be a $c$-regular projective hyper-surface of dimension $n$
$$H^m(Z) = H^m(\mathbb{P}^{n})$$ for $2n-(c+2) \leq m \leq 2n$

\end{lemma}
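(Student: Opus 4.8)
The plan is to reduce the $\ell$-adic statement to the classical topological one by a comparison argument, and then, since we want a proof intrinsic to positive characteristic, to give the direct cohomological argument that works for $\ell$-adic étale cohomology over any algebraically closed field. The geometric input is the same as in Dimca's approach. First I would set up the long exact sequence of the pair, or equivalently use the Gysin/excision framework: write $U = \mathbb{P}^{n+1} \smallsetminus Z$ for the complement, and relate $H^*(Z)$ to $H^*(\mathbb{P}^{n+1})$ and $H^*_c(U)$ via the long exact sequence $\cdots \to H^m_c(U) \to H^m(\mathbb{P}^{n+1}) \to H^m(Z) \to H^{m+1}_c(U) \to \cdots$. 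Since $H^m(\mathbb{P}^{n+1})$ agrees with $H^m(\mathbb{P}^n)$ for $m \le 2n$, it suffices to show $H^m_c(U) = 0$ for $2n - c - 1 \le m \le 2n + 1$, i.e. in the relevant range $U$ has no compactly supported cohomology. By Poincaré duality on the smooth $(n+1)$-dimensional variety $U$, $H^m_c(U) \cong H^{2n+2-m}(U)^\vee(-(n+1))$, so this is equivalent to the vanishing $H^j(U) = 0$ for $1 \le j \le c+1$ (up to checking the boundary terms carefully).

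The vanishing $H^j(U) = 0$ for $U = \mathbb{P}^{n+1} \smallsetminus Z$ with $Z$ an ample hypersurface whose singular locus has codimension $\ge c$ is the heart of the matter; this is an Artin–Lefschetz type affine-vanishing statement combined with a local analysis at the singularities. The clean way is: $U$ is the complement of an ample divisor, hence (étale) affine-like — more precisely one uses that $H^j(\mathbb{P}^{n+1} \smallsetminus Z)$ can be computed, when $Z$ is \emph{smooth}, to vanish for $j \le n$ by the affine Lefschetz theorem applied to $\mathbb{P}^{n+1}\smallsetminus Z$ (which is a smooth affine variety of dimension $n+1$ only in the hyperplane case, so in general one passes to a cyclic cover or uses the Milnor-fiber fibration). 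The singular case is handled by stratifying: remove the singular locus $\Sigma = Z_{\mathrm{sing}}$, note $Z \smallsetminus \Sigma$ is smooth, and use the excision triangle relating cohomology of $\mathbb{P}^{n+1}\smallsetminus Z$, $\mathbb{P}^{n+1}\smallsetminus \Sigma$ (into which the smooth locus case feeds) and a tubular neighbourhood of $\Sigma$. Because $\Sigma$ has dimension $\le n - c$, its cohomology and that of its punctured neighbourhood only contribute in degrees that are pushed out of the range $[1, c+1]$ by the codimension bound — this is exactly where the hypothesis ``$c$-regular'' is consumed. I would run this as a downward induction on $\dim \Sigma$ (equivalently upward on $c$), the base case $c = 0$ being vacuous and the smooth case $\Sigma = \emptyset$ being the affine Lefschetz input.

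The main obstacle, and the step I would spend the most care on, is the affine-vanishing / Lefschetz input in the étale setting: ensuring that $H^j_{\mathrm{\acute et}}(\mathbb{P}^{n+1}_{\bar k} \smallsetminus Z, \mathbb{Q}_\ell)$ vanishes in the expected range when $Z$ is smooth and ample. Over $\mathbb{C}$ Dimca invokes the topological Lefschetz theorem for affine varieties plus Morse theory on the Milnor fibration; the $\ell$-adic analogue of affine Lefschetz vanishing is a theorem of Artin (SGA~4), but one must be careful that the complement of an ample divisor, while not literally affine, still satisfies the needed cohomological dimension bound — this follows because $\mathbb{P}^{n+1}\smallsetminus Z$ maps to a point with affine structure after a finite cover, or one argues directly via the weak Lefschetz theorem for $\mathbb{Q}_\ell$-cohomology of the ambient projective space together with the hard Lefschetz / primitive decomposition. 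A secondary subtlety is keeping track of Tate twists and the precise boundary degrees in the long exact sequence so the stated range $2n - (c+2) \le m \le 2n$ comes out exactly, rather than off by one; I would verify the endpoints by hand against the known answer for $\mathbb{P}^n$. I would also double-check that no characteristic-$p$ pathology (e.g. wild ramification of the cyclic cover when $p \mid d$) intrudes — restricting to the generic situation or noting that the argument via weak/hard Lefschetz for the ambient space avoids covers entirely sidesteps this.
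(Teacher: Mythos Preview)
Your reduction is sound up to the point where you need $H^j(U)=0$ for small $j$ (roughly $1\le j\le c+1$), but the justification you offer for that vanishing is the wrong tool. Artin's affine vanishing theorem (SGA~4) gives $H^j_{\acute{e}t}(U,\QQ_\ell)=0$ for $j>\dim U=n+1$, i.e.\ vanishing in \emph{high} degrees; by Poincar\'e duality this translates to $H^m_c(U)=0$ for $m<n+1$, which is exactly the input for the \emph{low}-degree weak Lefschetz isomorphism $H^m(\PP^{n+1})\cong H^m(Z)$ for $m<n$. What the lemma asserts is the high-degree isomorphism, for which you need $H^m_c(U)=0$ with $m$ near $2n$, equivalently $H^j(U)=0$ for $j$ small. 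That is not an Artin-type statement at all, and for singular $Z$ it cannot be recovered from the smooth case by Poincar\'e duality or hard Lefschetz on $Z$, since those fail for singular varieties. (Incidentally, $U=\PP^{n+1}\smallsetminus Z$ \emph{is} literally affine, being $D_+(F)$.) Your stratification sketch does not close the gap either: removing $\Sigma=Z_{\mathrm{sing}}$ and invoking a ``tubular neighbourhood'' has no direct \'etale analogue, and in any case the smooth input you would feed in is itself the thing you have not established.

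The paper supplies exactly the missing ingredient, and it is genuinely different from what you propose. It passes to the affine cone, takes $F\colon\AA^{n+2}\to\AA^1$, and uses that the vanishing-cycle complex $\phi_F(\QQ_\ell)[n+1]$ is perverse and supported on the critical locus of $F$; the perverse support condition then bounds the stalk cohomology of $\phi_F$ in terms of $\dim\Sigma$. This is the \'etale replacement for the Milnor-fibre argument Dimca uses over $\CC$. From there one controls the nearby-cycle sheaf, reads off the cohomology of $F^{-1}(\AA^1\smallsetminus 0)$, applies an Alexander-duality step inside $\AA^{n+2}\smallsetminus\{0\}$ to get the cohomology of the punctured cone $F^{-1}(0)\smallsetminus\{0\}$ in the desired range, and finally descends to $Z$ via the Gysin sequence for the $\mathbb{G}_m$-bundle. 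If you want to repair your outline, the step to replace is precisely ``affine Lefschetz'': you need perversity of vanishing cycles (or an equivalent local argument) to tie the range of vanishing to $\mathrm{codim}\,\Sigma$.
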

Algebraic maps respects the Frobenius action. So from compering with the projective space we get

\begin{lemma}\label{lem:2}
Let $Z$ be a $c$-regular projective variety of dimension  $N$  over $\mathbb{F}_q$ then  we have 
$$\left |\# Z({k_n}) -  \sum_{i =N-\frac{c}{2}+1}^{N} q^{ni} \right | \leq M_Zq^{n(N-\frac{c}{2})} $$
\end{lemma}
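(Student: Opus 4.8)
The plan is to feed Lemma~\ref{lem:1} into the Grothendieck--Lefschetz trace formula and push everything outside the top cohomological degrees into the error term by means of Weil~II, exactly as the eigenvalue discussion preceding the statement indicates. I treat $Z$ as a $c$-regular hypersurface $Z\subset\PP^{N+1}$ of dimension $N$, which is the case of $X^F$ and $Y^F_t$ that we need and the setting to which Lemma~\ref{lem:1} applies directly. So I would begin with
$$\#Z(k_n)=\sum_{i=0}^{2N}(-1)^i\,\Tr\!\left(\phi_q^n\mid H^i_{\acute{e}t}(\bar Z)\right)$$
and split the sum at the threshold $2N-c-2$ coming from Lemma~\ref{lem:1}.

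For $2N-c-2\le i\le 2N$, Lemma~\ref{lem:1} gives $H^i_{\acute{e}t}(\bar Z)\cong H^i_{\acute{e}t}(\PP^N_{\bar k})$; in particular $H^i_{\acute{e}t}(\bar Z)=0$ for $i$ odd in this range, and for $i=2j$ even it is one-dimensional. The point I would have to check is that $\phi_q$ acts on this line by the scalar $q^j$ (and not by some other eigenvalue of the same absolute value): this follows because the restriction map $H^{2j}_{\acute{e}t}(\PP^{N+1}_{\bar k})\to H^{2j}_{\acute{e}t}(\bar Z)$ along the inclusion $\iota\colon Z\hookrightarrow\PP^{N+1}$ is a nonzero $\Gal(\bar k/\FF_q)$-equivariant map of lines — nonzero because $\iota_*\iota^*(h^j)=(\deg Z)\,h^{j+1}\neq 0$ in $H^{2j+2}_{\acute{e}t}(\PP^{N+1}_{\bar k})$ for $j\le N$, $\deg Z$ being nonzero in $\QQ_\ell$ — and $\phi_q$ acts on $H^{2j}_{\acute{e}t}(\PP^{N+1}_{\bar k})$ by $q^j$. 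Hence the contribution of the degrees $2N-c+2\le i\le 2N$ to the trace formula is precisely the main term $\sum_{i=N-c/2+1}^{N}q^{ni}$.

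It remains to estimate the tail, i.e.\ the contribution of $H^i_{\acute{e}t}(\bar Z)$ for $i\le 2N-c+1$. By Weil~II the eigenvalues of $\phi_q$ on $H^i_{\acute{e}t}(\bar Z)$ have absolute value at most $q^{i/2}$, so this contribution is at most $\sum_{i\le 2N-c+1}\dim H^i_{\acute{e}t}(\bar Z)\,q^{ni/2}$ in absolute value. The degree $i=2N-c+1$ is odd and still lies in the Lefschetz range, so $H^{2N-c+1}_{\acute{e}t}(\bar Z)=0$ and only degrees $i\le 2N-c$ survive; for those $q^{ni/2}\le q^{n(N-c/2)}$, and the tail is bounded by $\bigl(\sum_{i=0}^{2N}\dim H^i_{\acute{e}t}(\bar Z)\bigr)q^{n(N-c/2)}=M_Z\,q^{n(N-c/2)}$, which is the asserted inequality. (When $c$ is odd one replaces $c/2$ throughout by $\lfloor c/2\rfloor$; this affects nothing in the applications.) The only non-formal ingredient is the Frobenius-compatibility in the middle paragraph; everything else is the trace formula, geometric sums, and the standard Weil~II bound. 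If the proof of Lemma~\ref{lem:1} in the last section is arranged so that its isomorphism is manifestly induced by restriction from projective space — as Dimca's argument is — then even that ingredient is automatic.
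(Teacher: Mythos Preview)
Your argument is correct and follows the same route as the paper: feed Lemma~\ref{lem:1} into the trace formula, identify the top-degree contributions with those of $\PP^N$ via Frobenius compatibility, and bound the remaining degrees by Weil~II. The paper's proof is the single sentence ``Algebraic maps respect the Frobenius action, so from comparing with the projective space we get\ldots'', whereas you supply the details (in particular the explicit check that restriction from $\PP^{N+1}$ pins down the Frobenius eigenvalue on each $H^{2j}$), which is a welcome elaboration rather than a different approach.
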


Now by lemma \ref{lem:B} since all the $Y^F_{t}$ are defined by polynomials of the same  degree and number of variables we have some $M_F$ such that $$M_{X^F},M_{Y^F_t} \leq M_F$$ for all $t \in \bar{\mathbb{F}_q}$. Now by lemma \ref{lem:2} and the formula:

$$\#F|_n^{-1}(t) = \#Y^F_t({k_n}) - \#X^F({k_n})$$ 
we have: 
\begin{lemma}\label{lem:3}
Let $F$ be a $c$-good homogenous  polynomial  of  $N$ variables   over $\mathbb{F}_q$ then  we have for every $t \in {k_n}$ $$\left | \#F|_n^{-1}(t)  -  q^{n(N-1)} \right | \leq M_Fq^{n(N-\frac{c}{2})} $$ and 
$$\left |\mu_n^F(\{t\}) -\frac{1}{q^n} \right| \leq \frac{M_F}{ q^{n(\frac{c}{2}-1)}}$$
\end{lemma}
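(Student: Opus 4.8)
The plan is to apply Lemma~\ref{lem:2} to each of the two projective hypersurfaces occurring in the identity
$$\#F|_n^{-1}(t)=\#Y^F_t(k_n)-\#X^F(k_n),$$
and then to subtract the two estimates, using that their leading terms almost cancel, leaving $q^{n(N-1)}$.

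First I would pin down the geometry. Because $F$ has degree exactly $d$, its top homogeneous part $\tilde F$ is nonzero, so $X^F\subset\mathbb{P}^{N-1}$ is a genuine hypersurface of dimension $N-2$; likewise the degree-$d$ homogenisation of $F-t$ is nonzero, so $Y^F_t\subset\mathbb{P}^{N}$ is a genuine hypersurface of dimension $N-1$, and its part at infinity is exactly $X^F$, so that $Y^F_t(k_n)\smallsetminus X^F(k_n)=F|_n^{-1}(t)$. Since $F$ is $c$-good, both $X^F$ and $Y^F_t$ are $c$-regular, so Lemma~\ref{lem:2} applies to each of them (for $t\in k_n\smallsetminus k$ one applies Lemma~\ref{lem:2} over the field $k_n$, which changes nothing, as the Betti numbers of the base change to $\bar k$ do not depend on the field of definition). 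Lemma~\ref{lem:B} then provides a single constant $M=M(d,N)$ with $M_{X^F},M_{Y^F_t}\le M$ for all $t$.

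Next I would combine the estimates. For $Y^F_t$, of dimension $N-1$, Lemma~\ref{lem:2} gives
$$\Bigl|\#Y^F_t(k_n)-\sum_{i=N-c/2}^{N-1}q^{ni}\Bigr|\le M\,q^{n(N-1-c/2)},$$
and for $X^F$, of dimension $N-2$,
$$\Bigl|\#X^F(k_n)-\sum_{i=N-1-c/2}^{N-2}q^{ni}\Bigr|\le M\,q^{n(N-2-c/2)}.$$
The two index ranges overlap in $\{N-c/2,\dots,N-2\}$, so the difference of the two main sums telescopes to $q^{n(N-1)}-q^{n(N-1-c/2)}$, whence
$$\Bigl|\#F|_n^{-1}(t)-q^{n(N-1)}\Bigr|\le q^{n(N-1-c/2)}+M\,q^{n(N-1-c/2)}+M\,q^{n(N-2-c/2)}\le(1+2M)\,q^{n(N-c/2)},$$
using $q^{n(N-2-c/2)}\le q^{n(N-1-c/2)}\le q^{n(N-c/2)}$. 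Renaming the constant $M_F:=1+2M$ (which still depends only on $d$ and $N$) yields the first displayed inequality; dividing by $\#V(k_n)=q^{nN}$ gives
$$\Bigl|\mu_n^F(\{t\})-\frac{1}{q^n}\Bigr|\le M_F\,q^{-nc/2}\le\frac{M_F}{q^{n(c/2-1)}},$$
the second inequality, with a little room to spare.

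The hard part is essentially nonexistent: all the real content sits in Lemmas~\ref{lem:1}, \ref{lem:2} and \ref{lem:B}, and the present lemma is just bookkeeping around the leading-term cancellation between the $(N-1)$-dimensional $Y^F_t$ and the $(N-2)$-dimensional $X^F$. The only points deserving a line of care are: checking that $Y^F_t$ genuinely has dimension $N-1$ (this is where the hypothesis $\deg F=d$ enters, ruling out degeneration at infinity), noting that the $c$-good hypothesis is exactly what licenses Lemma~\ref{lem:2} for both hypersurfaces and every $t\in k_n$, and observing that one is free to inflate the auxiliary constant by a bounded factor, since Lemma~\ref{lem:B} bounds all the occurring Betti numbers uniformly in terms of $d$ and $N$ only.
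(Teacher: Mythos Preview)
Your proposal is correct and follows exactly the same approach as the paper: apply Lemma~\ref{lem:2} to $Y^F_t$ (dimension $N-1$) and to $X^F$ (dimension $N-2$), invoke Lemma~\ref{lem:B} for a uniform Betti bound, and subtract using $\#F|_n^{-1}(t)=\#Y^F_t(k_n)-\#X^F(k_n)$. You are simply more explicit than the paper about the telescoping of the two main sums and the absorption of the leftover $q^{n(N-1-c/2)}$ term into the constant.
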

\begin{remark}\label{r:1}
Not that the proof above really proves a stronger statement namely if $F$ and $G$ are degree $d$ polynomials with the same homogenous part  and which are $c$-good , we have 
   $$\left |\mu_n^F(\{t\}) -\frac{1}{q^n} \right| \leq \frac{2M_F}{ q^{n(\frac{c}{2}-1)}}$$

\end{remark}

A conclusion of lemma \ref{lem:3} in that 

$$\max _{t,s\in k_n}\left|\mu_n^F(\{t\}) -\mu_n^F(\{s\}) \right| \leq \frac{2M_F}{ q^{n(\frac{c}{2}-1)}}$$
and thus
$$ b_n=-\log_{q^n}(\max _{t,s\in k_n}(|\mu^F _n(t)-\mu^F _n(s)|))   \geq \frac{c}{2}-1  - \frac{\log_{q}(2M_F)}{n}$$
and

$$\mathcal{B}(F)= \limsup_{n \to \infty} \frac{1}{b_n} \leq \frac{2}{c-2} $$
\begin{remark}
We thus get  a close relationship between the bias of $F$ and it's singular locus codimention.  A similar result in terms of Fourier analysis   can be found in \cite{cook}.
\end{remark}

The main result now follows from the above bound together with Lemma \ref{lem:good} in the next section.
\section{$c$-regularity and strength}
In this section we shall connect $c$-regularity  to notion of rank, one key result we shall use is Theorem A.  \cite{Hof}. 
It should be noted though that in \cite{Hof} our notion of rank is called strength.

Now given $F$ a polynomial of degree $d$ over $\mathbb{F}_q$.  For $t \in \bar{\mathbb{F}_q}$, denote by $\hat{F_t}(\bullet,Z)$ the homogenization of the  polynomial $F -t$. Here $Z$ is the extra variable of the homogenization. Note that $\hat{F_t} = \hat{F_t}(\bullet,Z)$  is the defining polynomial of the variety  $Y_t^F$ defined above.
\begin{lemma}\label{lem:5} 
For every $t \in \bar{\mathbb{F}_q}$
$$\mathrm{rank}( F) \leq \mathrm{rank}( \hat{F_t} ) \leq \mathrm{rank}(F) +1 $$
\end{lemma}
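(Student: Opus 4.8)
The plan is to relate the rank (= strength) of the homogeneous degree-$d$ part of $F-t$ to that of the homogenization $\hat F_t(\bullet, Z)$. Write $F = \tilde F + (\text{lower order terms})$, where $\tilde F$ is the degree-$d$ homogeneous part; since $t$ is a constant, the degree-$d$ homogeneous part of $F - t$ is again $\tilde F$, so by definition $\mathrm{rank}(F) = \mathrm{rank}(F-t) = \mathcal R(\tilde F)$. On the other hand, $\hat F_t(\bullet, Z)$ is already homogeneous of degree $d$ in the variables $(x_1,\dots,x_N,Z)$, so $\mathrm{rank}(\hat F_t) = \mathcal R(\hat F_t)$ directly. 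Thus the statement to prove is $\mathcal R(\tilde F) \le \mathcal R(\hat F_t) \le \mathcal R(\tilde F) + 1$.

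For the left inequality, I would take a minimal $r$-factorization $\hat F_t = \sum_{i=1}^{r} Q_i P_i$ with $\deg Q_i, \deg P_i < d$ over $\bar k$, and specialize $Z = 0$. Since setting $Z=0$ in $\hat F_t$ recovers $\tilde F$ (the degree-$d$ part of $F$ is the only part not killed by the homogenization/dehomogenization bookkeeping — concretely $\hat F_t(x,Z) = \tilde F(x) + Z\cdot(\ldots)$), we get $\tilde F = \sum_i Q_i|_{Z=0}\, P_i|_{Z=0}$, a presentation with at most $r$ terms and with factor degrees still $< d$; some terms may vanish, only lowering the count. Hence $\mathcal R(\tilde F) \le r = \mathcal R(\hat F_t)$. (One should check the degenerate possibility that a factor becomes a nonzero constant, which would make the corresponding term degree $<d$ but still a legitimate summand of the required shape — or simply absorb/discard it — and also that $\tilde F \ne 0$ so that "rank" is defined, which holds as $F$ has degree $d$.)

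For the right inequality, start from a minimal factorization $\tilde F = \sum_{i=1}^{r} Q_i P_i$, $r = \mathcal R(\tilde F)$, homogenize each $Q_i$ and $P_i$ in the variable $Z$ to $\hat Q_i, \hat P_i$, and note $\hat Q_i \hat P_i$ is the degree-$d$ homogenization of $Q_i P_i$ only up to a power of $Z$; more carefully, write $\hat F_t(x,Z) = \tilde F(x) + Z^{1}\cdot G(x,Z)$ for an explicit $G$ coming from the lower-degree terms of $F-t$, and observe this is a presentation $\hat F_t = \sum_i (\text{homogenized } Q_iP_i \text{ terms}) + Z\cdot G$, giving at most $r+1$ terms with all factor degrees $<d$ (the $Z\cdot G$ term has the factor $Z$ of degree $1<d$ and $G$ of degree $d-1<d$). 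Therefore $\mathcal R(\hat F_t) \le r+1 = \mathcal R(\tilde F)+1$.

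The main obstacle is purely bookkeeping: making sure the homogenization of a product $Q_iP_i$ of non-homogeneous polynomials of degrees $< d$ can be written as a sum of products of homogeneous polynomials each of degree $< d$ contributing to the degree-$d$ part of $\hat F_t$, rather than introducing spurious higher powers of $Z$ or terms of the wrong degree; the cleanest route is to not homogenize the factorization at all but instead directly split $\hat F_t(x,Z) = \tilde F(x) + Z\,G(x,Z)$, apply the given factorization to $\tilde F$, and handle the single extra $Z\,G$ summand, where $\deg_x$ and total degree constraints are immediate. Care is also needed with the edge case $d=2$ (so factors have degree $<2$, i.e. are linear/constant), but the argument goes through verbatim.
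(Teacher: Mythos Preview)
Your proposal is correct and follows exactly the paper's approach: the left inequality comes from specializing $Z=0$ (the paper just writes $\tilde F = \hat F_t(\bullet,0)$), and the right inequality from the splitting $\hat F_t(\bullet,Z) = \tilde F(\bullet) + Z\,G(\bullet,Z)$ with $G$ homogeneous of degree $d-1$, which is precisely your ``cleanest route''. Your detour through homogenizing the individual factors $Q_i,P_i$ is unnecessary (as you yourself note), and the edge cases you worry about are harmless since the definition of an $r$-factorization only requires $\deg Q_i,\deg P_i<d$, not that each summand be homogeneous of degree $d$.
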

\begin{proof}
Recall that by definition $\mathcal{R} (F) = \mathcal{R} (\tilde{F}) $ where $\tilde{F}$ is the homogenous  degree $d$ part of $F$.
The first bound is now achieved by noticing that $\tilde{F} = \hat{F_t}(\bullet,0)$.
The second bound is achieved by the observation that 
$$\hat{F_t}(\bullet,Z) = \tilde{F}(\bullet) + ZG(\bullet,Z)$$
for some $G$ homogenous of degree $d-1$.
\end{proof}
\begin{remark}
Since here we are only using that $F$ and $F-t$ have the same homogenous  part taking into account remark \ref{r:1} we get that 
For  fixed $d$ and $c$. Then there exists $r,M>0$ such that for any two 
polynomial $F,G$ of degree $d$ on $n+1$ variables with the same homogenous  part  and rank $\mathcal{R}(F) =\mathcal{R}(G)  \geq r$  we have
$$\left |\#F^{-1}(0) - \# G^{-1}(0)\right |  \ll M q^{n-c}
$$
\end{remark}
The following lemma now follows immediately  from Theorem A.  \cite{Hof} and Lemma \ref{lem:5}
\begin{lemma}\label{lem:good}
Let $c,d >0$ be integers there exists a number $A(c,d)$ such that  every homogenous polynomial  $F$ of degree $d$ 
with  $\mathcal{R}(F) > A(c,d)$ we have that $F$ is $c$-good.
\end{lemma}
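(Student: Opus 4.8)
The plan is to deduce this directly from Theorem~A of \cite{Hof} together with Lemma~\ref{lem:5}, which is exactly the reduction the paragraph preceding the statement is setting up. Recall what must be shown: if $F$ is homogeneous of degree $d$ and $\mathcal{R}(F)$ is large enough (in terms of $c$ and $d$ only), then $F$ is $c$-good, i.e.\ both $X^F$ (the projective hypersurface cut out by $\tilde F = F$ in $\mathbb{P}^{N-1}$) and $Y_t^F$ (the projective closure of $\{F = t\}$ in $\mathbb{P}^{N}$, cut out by the homogenization $\hat F_t$) have singular locus of codimension at least $c$, for every $t \in \bar k$. The content of Theorem~A of \cite{Hof} (with ``strength'' $=$ our ``rank'') is precisely that high strength of a homogeneous form forces its projective zero locus to be highly regular: there is a function $A_0(c,d)$ so that any homogeneous $G$ of degree $d$ with $\mathcal{R}(G) > A_0(c,d)$ has $\{G=0\}$ with singular locus of codimension $\geq c$. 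So the job is just bookkeeping between the ranks of $F$, of $\tilde F$, and of the various $\hat F_t$.

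First I would fix $c$ and $d$ and set $A(c,d) := A_0(c,d) + 1$, where $A_0$ is the bound furnished by Theorem~A of \cite{Hof}. Suppose $F$ is homogeneous of degree $d$ with $\mathcal{R}(F) > A(c,d)$. Since $F$ is homogeneous we have $\tilde F = F$, so $\mathcal{R}(\tilde F) = \mathcal{R}(F) > A_0(c,d)$, and Theorem~A of \cite{Hof} applied to $\tilde F$ gives that $X^F = \{\tilde F = 0\} \subset \mathbb{P}^{N-1}$ is $c$-regular. Next, for each $t \in \bar k$, Lemma~\ref{lem:5} gives
$$\mathcal{R}(\hat F_t) \geq \mathcal{R}(F) > A_0(c,d) + 1 > A_0(c,d),$$
so Theorem~A of \cite{Hof} applied to the homogeneous degree-$d$ form $\hat F_t$ shows that $Y_t^F = \{\hat F_t = 0\} \subset \mathbb{P}^{N}$ is $c$-regular as well. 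Since this holds for every $t$, $F$ is $c$-good by definition.

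The only genuine point to check — and the step most likely to need care rather than being purely formal — is that the bound $A_0(c,d)$ extracted from \cite{Hof} is uniform in the number of variables $N$ and is insensitive to the base field within the range of characteristics we allow (here $p > d$), since we need one constant $A(c,d)$ working for all $V$ and all $q$. This uniformity is exactly what the cited Theorem~A provides, but it is worth stating explicitly that we are invoking it in that strong form, because without field- and dimension-independence the argument for Theorem~1 would not go through. Modulo that, the lemma is immediate: apply the regularity criterion once to $\tilde F$ for $X^F$, and once to each $\hat F_t$ for $Y_t^F$, using Lemma~\ref{lem:5} to transfer the rank bound.
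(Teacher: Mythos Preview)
Your proposal is correct and follows exactly the approach the paper indicates: apply Theorem~A of \cite{Hof} to $\tilde F$ to get $c$-regularity of $X^F$, and to each $\hat F_t$ (using the lower bound from Lemma~\ref{lem:5}) to get $c$-regularity of $Y^F_t$. The paper itself gives no more detail than ``follows immediately from Theorem~A of \cite{Hof} and Lemma~\ref{lem:5}'', so your write-up is in fact more explicit; the only superfluous step is the ``$+1$'' in your choice of $A(c,d)$, since Lemma~\ref{lem:5} already gives $\mathcal{R}(\hat F_t)\geq \mathcal{R}(F)$.
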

\section{$c$-Regularity and Cohomology }
This section is dedicated to the proof of the following lemma: 

\begin{lemma}\label{lem:A}
Let $X \subset \mathbb{P}^{N+1}$  be a $c$-regular projective hyper-surface of dimension $n$
$$H^m(X) = H^m(\mathbb{P}^{N})$$ for $2N-(c+2) \leq m \leq 2N$

\end{lemma}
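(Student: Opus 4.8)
The plan is to deduce the statement from a vanishing theorem for the complement $U:=\mathbb{P}^{N+1}\smallsetminus X$, which is a \emph{smooth} affine variety of dimension $N+1$ no matter how singular $X$ is. First I would record the reduction. Writing $i\colon X\hookrightarrow\mathbb{P}^{N+1}$ and $j\colon U\hookrightarrow\mathbb{P}^{N+1}$, the localisation triangle $j_!\mathbb{Q}_\ell\to\mathbb{Q}_\ell\to i_*\mathbb{Q}_\ell$ on $\mathbb{P}^{N+1}$ gives, since $\mathbb{P}^{N+1}$ and $X$ are proper,
$$\cdots\to H^m_c(U,\mathbb{Q}_\ell)\to H^m(\mathbb{P}^{N+1},\mathbb{Q}_\ell)\to H^m(X,\mathbb{Q}_\ell)\to H^{m+1}_c(U,\mathbb{Q}_\ell)\to\cdots,$$
and Poincar\'e duality on the smooth variety $U$ identifies $H^m_c(U,\mathbb{Q}_\ell)$ with $H^{2N+2-m}(U,\mathbb{Q}_\ell)^{\vee}(-(N+1))$. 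Using in addition that $H^m(\mathbb{P}^{N+1},\mathbb{Q}_\ell)\to H^m(X,\mathbb{Q}_\ell)$ is injective for $m\le 2N$ — because if $h\in H^2(\mathbb{P}^{N+1},\mathbb{Q}_\ell)$ is the hyperplane class then $h^N|_X\neq 0$ in $H^{2N}(X,\mathbb{Q}_\ell)$ by the projection formula (as $\deg X\neq 0$ in $\mathbb{Q}_\ell$), hence each power $h^k|_X$ with $k\le N$ is nonzero — the Lemma is reduced to the single claim $H^j(U,\mathbb{Q}_\ell)=0$ for $1\le j\le c-1$, where $c=N-\dim\mathrm{Sing}(X)$ is the codimension of the singular locus in $X$. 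Indeed, feeding this vanishing into the exact sequence annihilates $H^m_c(U)$ and $H^{m+1}_c(U)$ for all $m$ in the asserted range near $2N$, while the complementary low range $m\le N-1$ is the ordinary weak Lefschetz theorem and needs no hypothesis on $X$.

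It then remains to prove $H^j(U,\mathbb{Q}_\ell)=0$ for $1\le j\le c-1$, and I would do this by induction on $s:=\dim\mathrm{Sing}(X)$. The base case $s=-1$, i.e.\ $X$ smooth, is classical: the Gysin sequence $H^{j-2}(X,\mathbb{Q}_\ell)(-1)\to H^j(\mathbb{P}^{N+1},\mathbb{Q}_\ell)\to H^j(U,\mathbb{Q}_\ell)\to H^{j-1}(X,\mathbb{Q}_\ell)(-1)\to H^{j+1}(\mathbb{P}^{N+1},\mathbb{Q}_\ell)$, together with weak Lefschetz for the smooth hypersurface $X$ and the injectivity of the Gysin pushforwards of hyperplane-class powers, gives $H^j(U,\mathbb{Q}_\ell)=0$ for $1\le j\le N$. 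For the inductive step, choose a hyperplane $H\subset\mathbb{P}^{N+1}$ general enough that (i) by Bertini over the infinite field $\bar k$ the section $X_0:=X\cap H\subset H\cong\mathbb{P}^{N}$ is a hypersurface of dimension $N-1$ with $\mathrm{Sing}(X_0)=\mathrm{Sing}(X)\cap H$ of dimension $\le s-1$, and (ii) $H$ is general in the sense required for the affine Lefschetz theorem applied to the smooth affine variety $U$. By (i), $X_0$ again has singular locus of codimension $\ge c$, so by the inductive hypothesis $H^j(\mathbb{P}^{N}\smallsetminus X_0,\mathbb{Q}_\ell)=0$ for $1\le j\le (N-1)-(s-1)-1=c-1$. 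Now $\mathbb{P}^{N}\smallsetminus X_0=U\cap H$, and Artin's affine Lefschetz theorem gives that $H^j(U,\mathbb{Q}_\ell)\to H^j(U\cap H,\mathbb{Q}_\ell)$ is an isomorphism for $j<\dim U-1=N$, in particular for $1\le j\le c-1\le N-1$. Combining, $H^j(U,\mathbb{Q}_\ell)=0$ in that range, closing the induction.

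All the ingredients — the localisation sequence, Poincar\'e duality for smooth varieties, Bertini over infinite fields, and Artin vanishing — are available for $\ell$-adic cohomology over an algebraically closed field of arbitrary characteristic, so no resolution of singularities enters and, unlike a proof through the Milnor fibration $\{f=1\}\to U$, no hypothesis such as $p\nmid d$ is needed. The point deserving most care is the precise form of the affine Lefschetz theorem invoked in the inductive step: one needs that a \emph{general linear} hyperplane section of the smooth affine variety $U\subset\mathbb{P}^{N+1}$ computes $H^j(U,\mathbb{Q}_\ell)$ up to degree $\dim U-2$, which holds because $U$ is smooth (its rectified homotopical depth is maximal), together with the bookkeeping that a single $H$ can be chosen in one dense open subset of $(\mathbb{P}^{N+1})^{\vee}$ meeting simultaneously the Bertini condition on $\mathrm{Sing}(X)$ and the genericity needed for affine Lefschetz — possible since both are dense open and $\bar k$ is infinite. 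I would carry out the base-case Gysin computation explicitly and cite SGA~4, Exp.~XIV (and the classical case in \cite{dimca1}) for the affine Lefschetz statement.
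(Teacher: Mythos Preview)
Your argument is correct and takes a genuinely different route from the paper's proof. The paper works entirely on the affine cone: it shows that the vanishing-cycles complex $\phi_F(\mathbb{Q}_\ell)[N+1]$ is perverse and supported on the (low-dimensional) critical locus, feeds this into the nearby-cycles triangle and the Wang cofibre sequence to control the stalk of $i^*j_*\mathbb{Q}_\ell$ at the cone point, then invokes a Verdier-type lemma on $\mathbb{G}_m$-equivariant sheaves to identify that stalk with $R\Gamma\bigl(F^{-1}(\mathbb{A}^1\smallsetminus\{0\})\bigr)$; finally Alexander duality in the punctured affine space and the Gysin sequence for the $\mathbb{G}_m$-bundle $F^{-1}(0)\smallsetminus\{0\}\to X$ transfer the resulting vanishing range to $H^*(X)$. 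Your proof never passes to the cone: you reduce, via the localisation triangle and Poincar\'e duality on the smooth affine complement $U=\mathbb{P}^{N+1}\smallsetminus X$, to the single vanishing $H^j(U,\mathbb{Q}_\ell)=0$ for $1\le j\le c-1$, and establish that by decreasing induction on $\dim\mathrm{Sing}(X)$ using Bertini plus affine weak Lefschetz. This is essentially Dimca's original topological argument transported to the \'etale setting; its advantage is that it is elementary (no perverse sheaves, no $\psi_F/\phi_F$, no Verdier lemma), while the paper's approach buys a one-shot proof with no induction and makes the role of the singular locus completely transparent through the support of $\phi_F$.

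One point to tighten: the step ``$H^j(U)\to H^j(U\cap H)$ is an isomorphism for $j<\dim U-1$'' is not the content of SGA~4, Exp.~XIV (that expos\'e gives Artin vanishing $H^j(U,\mathcal{F})=0$ for $j>\dim U$, which you also use but which is a different statement). For the \'etale affine Lefschetz comparison with a generic hyperplane section you should rather point to Beilinson's weak Lefschetz for perverse sheaves, or to Jouanolou's Bertini exposition, or derive it yourself from Artin vanishing via a pencil argument; the statement is true, but the reference as written would not support it.
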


\begin{proof}
Consider the constant  $\ell$-adic sheaf $\QQ_\ell$ on $\mathbb{A}^{N+2}$. The sheaf $\QQ_\ell[N+1]$ is perverse on $\mathbb{A}^{N+2}$  
Let $F:\mathbb{A}^{N+2} \to \mathbb{A}^1$ be an \textbf{homogenous} polynomial. Let $\phi_F$ be the functor of vanishing cycles attached to $F$ 
We get that $\phi_F(\QQ_\ell[N+2])[-1] = \phi_F(\QQ_\ell)[N+1]$ is perverse on $F^{-1}(0)$ and concentrated on the critical  locus of $F$, that is the singular locus
of $F^{-1}(0)$ by assumption the  singular locus is of dimension $s:= N+1-c$ so by perversity the stalk of $\phi_F(\QQ_\ell)[N+1]$ at any point  has noN-zero homology only in degrees $[0,s]$ consider now the short exact sequence of  perverse sheafs on $F^{-1}(0)$

$$0\to  \QQ_\ell[N+1]\to \psi_F(\QQ_\ell)[N+1] \to \phi_F(\QQ_{\ell})[N+1] \to 0$$
Here $\psi_F$ is the nearby cycle functor. We conclude that the stalks of $\psi_F(\QQ_\ell)$ are concentrated in degrees $0$ and $[-N-1,-N+s-1]$

Now let $i:F^{-1}(0) \to \mathbb{A}^{N+2}$ be the close embedding and $j$ the  embedding of the open  complement.
We have the Wang cofiber sequence 

$$i^*j_*j^*\QQ_\ell \to \psi_F(\QQ_\ell) \xrightarrow{T-id} \psi_F(\QQ_\ell) $$

So the stalks of  $i^*j_*j^*\QQ_\ell$  are concentrated at degrees   $[-1,0]$ and $[-N-2,-N+s-1]$ in particular let $p:\mathbb{A}^0 \to \mathbb{A}^{N+2}$ be the inclusion of the origin. We get that $p^* j_*\QQ_{\ell}$ is concentrated at degrees   $[-1,0]$ and $[-N-2,-N+s-1]$. The following lemma is  a special case of 
Lemma 6.1 in \cite{Verd}.
\begin{lemma}\label{lem:verd}
Let $$J:\mathbb{A}^{N+2} \smallsetminus {O} \to \mathbb{A}^{N+2}$$ be the embedding and let $A$ be a $\mathbb{G}_m$ invariant   sheaf on $\mathbb{A}^{N+2} \smallsetminus {O} $ let $\pi:\mathbb{A}^{N+2} \to\mathbb{A}^{0} $ be the projection then 
$$\pi_*(J_{!}(A)) = 0$$
\end{lemma}

\begin{corollary}
The map 
$$ \pi_*j_*\QQ_{\ell}\to \pi_*p_*p^*j_*\QQ_{\ell}  \cong p^*j_*\QQ_{\ell} $$
is an equivalence.
\end{corollary}
\begin{proof}
We have a cofiber sequence 
$$ J_!J^!j_*\QQ_{\ell} \to j_*\QQ_{\ell} \to p_*p^*j_*\QQ_{\ell}$$
and therefore it is enough to show that $\pi_*J_!J^!j_*\QQ_{\ell}   = 0$ by \ref{lem:verd} we reduced to show that $J^!j_*\QQ_{\ell} = \kappa_*\QQ_{\ell}$ is $\mathbb{G}_m$ invariant where 
$$\kappa \colon F^{-1}(\mathbb{A}^1\smallsetminus \{0\}) \to \mathbb{A}^{N+2} \smallsetminus \{0\}$$
is the embedding. This is immediate from the homogeneity of $F$.
\end{proof}

Now let $$\hat{\pi}\colon F^{-1}(\mathbb{A}^1\smallsetminus \{0\}) \to \mathbb{A}^{0}$$ be unique map. We have that $\hat{\pi}_*\hat{\pi}^*\QQ_{\ell} $   is concentrated in degrees  $[-1,0]$ and $[-N-2,-N+s-1]$. So $H^{m}(F^{-1}(\mathbb{A}^1\smallsetminus \{0\})) = 0$ for $2 \leq m \leq N-s$.

Now let $$\iota \colon F^{-1}(0) \smallsetminus \{0\} \to \mathbb{A}^{N+2}\smallsetminus \{0\}$$ be the emmebeding. $\mathbb{A}^{N+1} \smallsetminus \{0\}$ behaves cohomoligcally  like  a sphere of dimention $2N+3$ so we get by Alexander duality (for $\iota$ and $\kappa$ ) that $$H^{m}(F^{-1}(0)  \smallsetminus \{O\} = 0$$ for  $$  2N+3-c = N+2 +s \leq m\leq 2N.$$

Let us now denote $S:= \mathbb{A}^{N+2} \smallsetminus \{O\}$ and $K = F^{-1}(0) \smallsetminus \{O\}$ and 
consider  the map of $\mathbb{G}_m$ bundles: 

$$
\xymatrix{
K \ar[r]\ar[d]  &S \ar[d]\\
X  \ar[r] & \mathbb{P}^{N}
}
$$

By taking the associated  Gysin sequences we get:

$$\xymatrix{
\empty\ar[r] & H^{m+1}(S) \ar[r]\ar[d]&  H^{m}(\mathbb{P}^{N+1}) \ar[r]^{\psi}\ar[d] &  H^{m+2}(\mathbb{P}^{N+1}) \ar[r]\ar[d]&  H^{m+2}(S) \ar[r]\ar[d]& \empty \\
\empty\ar[r] & H^{m+1}(K) \ar[r] &  H^{m}(X)  \ar[r]^{\psi_V} &  H^{m+2}(X) \ar[r] &  H^{m+2}(K) \ar[r] & \empty 
}$$

Now by the above computation   $$H^{2N}(K) =0$$    and $$H^{2N+1}(X) = 0$$ since $X$ is of dimension $N$ so $H^{2N-1}(X) =0$. Now since $H^{2N-2}(K) =0$ by the above computation we get that $H^{2N-3}(X) = 0$ as well.
We proceed  by decreasing induction and get the result for odd $m$.
Similarly the result for $H^{2N}(X)$ follows from $V$ being irreducible (Note that of $c <2$ the statement of the lemma is vacuous)

\end{proof}
This additional lemma is useful for achieving the bounds in the proof.  
\begin{lemma}\label{lem:B}

Let $k$ be a field  and  $N$ and $d$ be integers. There exists a universal bound $M(N,d)$ such that 

For every non zero  homogenous  polynomial $F \in k[x_0,...,x_{N}]$ the zero locus of $F$ , the variety  $V_F \subset \mathbb{P}^{N}$ satisfies  

$$M_{V_F}:=  \sum_{i =0}^{2e}\dim\left(H^i_{\acute{e}t}(\bar{V}_F) \right) \leq M(N,d)$$

\end{lemma}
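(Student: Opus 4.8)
The plan is to prove the uniform bound $M(N,d)$ on the total Betti number of any degree-$d$ hypersurface in $\mathbb{P}^N$ by a Noetherian/constructibility argument over the Hilbert scheme of such hypersurfaces, combined with the fact that $\ell$-adic cohomology is constructible in families. First I would recall that the hypersurfaces of degree $d$ in $\mathbb{P}^N$ are parametrized by a projective space $\mathbb{P}^M$ with $M = \binom{N+d}{d}-1$, and that over $\mathbb{P}^M_{\mathbb{Z}}$ (or over any base) there is a universal hypersurface $\mathcal{V} \subset \mathbb{P}^N \times \mathbb{P}^M$ which is proper and flat over $\mathbb{P}^M$. One then wants to say that the function sending a geometric point $[F] \in \mathbb{P}^M(\bar k)$ to $M_{V_F} = \sum_i \dim H^i_{\acute et}(\bar V_F, \mathbb{Q}_\ell)$ is bounded.

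The key input is the constructibility theorem for higher direct images: for the proper morphism $f\colon \mathcal{V} \to \mathbb{P}^M$, the sheaves $R^i f_* \mathbb{Q}_\ell$ are constructible $\mathbb{Q}_\ell$-sheaves on $\mathbb{P}^M$, vanishing for $i > 2N$. By proper base change, the stalk of $R^i f_* \mathbb{Q}_\ell$ at a geometric point $[F]$ is $H^i_{\acute et}(\bar V_F, \mathbb{Q}_\ell)$. Since a constructible sheaf on a Noetherian scheme has a finite stratification on whose strata it is lisse of finite (hence bounded) rank, each $R^i f_* \mathbb{Q}_\ell$ has a global bound $r_i$ on its stalk dimensions; summing over $0 \le i \le 2N$ gives $M(N,d) := \sum_{i=0}^{2N} r_i$, which depends only on $N$ and $d$. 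One subtlety to address in characteristic $p$: the bound a priori works field-by-field, so to get a bound uniform in all fields $k$ one should either work over $\Spec \mathbb{Z}$ (constructibility of $R^i f_* \mathbb{Q}_\ell$ over $\mathbb{P}^M_{\mathbb{Z}}$ after inverting $\ell$, using Deligne's generic constructibility and then spreading out), or invoke that the Betti numbers of a fixed hypersurface are independent of the auxiliary prime $\ell \ne p$ and bounded by purely geometric (e.g. Milnor-number-type) quantities; I would take the first route, citing SGA~4 constructibility and the fact that the generic fibre dominates.

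I would then also note the minor point that the statement as written says ``$2e$'' in the summation range while $e$ should be the dimension $N-1$ of $V_F$; this is harmless since $H^i$ vanishes above $2(N-1)$ anyway, and a uniform bound over $0 \le i \le 2N$ certainly suffices. The nonzero hypothesis on $F$ guarantees $V_F$ is a genuine hypersurface (not all of $\mathbb{P}^N$), so it corresponds to an honest point of $\mathbb{P}^M$ and lies in the family.

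The main obstacle is the uniformity in the field $k$ (and, implicitly, the independence from $\ell$): constructibility on a single Noetherian base gives a bound only for hypersurfaces over algebraically closed fields of one fixed characteristic, so the real work is organizing the argument over $\Spec \mathbb{Z}[1/\ell]$ — or, equivalently, checking that the stratification of $\mathbb{P}^M$ coming from constructibility can be chosen compatibly with reduction mod $p$ — so that the same finite list of rank bounds $r_i$ applies simultaneously to all $\mathbb{F}_q$ and their algebraic closures. Everything else (proper base change, finiteness of Betti numbers of a fixed proper variety, the parametrizing projective space) is standard.
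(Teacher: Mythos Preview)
Your proposal is correct and follows essentially the same approach as the paper: parametrize degree-$d$ hypersurfaces by the projective space $\mathbb{P}(W_{N,d})$, form the universal family, and use constructibility of the higher direct images of $\mathbb{Q}_\ell$ together with proper base change to bound the stalk dimensions. Your discussion of uniformity over $\Spec\,\mathbb{Z}[1/\ell]$ in fact goes further than the paper, which simply works over the fixed field $k$ and does not address this point explicitly.
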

\begin{proof}
Let $W_{N,d}:=  k^{d}[x_0,...,x_{N}]$ be the vector space of degree $d$ homogenous polynomials  (note that $\dim W_{N,d}   ={{N+d} \choose{d}}$). 
Denote by $$X_{N,F}  \subset \mathbb{P}^{N} \times \mathbb{P}(W_{N,d})$$
The variety of points $(v,F)$ such that $F(v) = 0$
We have a proper projection map 
$$ \pi\colon X_{N,F} \to  \mathbb{P}(W_{N,d}).$$
The sheaf $C = \pi_*\QQ_{\ell}$ is constructible on $\mathbb{P}(W_{N,d})$. by the proper base change theorem the stalk of $C$ over a polynomial $F \in \mathbb{P}(W_{N,d})(k) $ is the $\ell$-adic cohomology complex of $V_F$. by the boundedness properties of constructible sheaves we are done. 
\end{proof}

\bibliographystyle{amsalpha}
\bibliography{biblio}

\end{document}